\documentclass{birkjour}
 \newtheorem{thm}{Theorem}[section]
 
 \newtheorem{lem}[thm]{Lemma}
 \newtheorem{prop}[thm]{Proposition}
 \theoremstyle{definition}
 
 \theoremstyle{remark}
 \newtheorem{rem}[thm]{Remark}
 
 \numberwithin{equation}{section}

 \usepackage{cite}
\usepackage{hyperref}
\hypersetup{
	colorlinks,
	linkcolor = {blue},
	citecolor = {blue},
	filecolor = {blue},
	urlcolor = {blue} 
} 
\usepackage{amsmath}
\usepackage{url}
\usepackage{float}
\usepackage{amssymb}

\newcommand{\arcangle}{%
  \mathord{<\mspace{-9mu}\mathrel{)}\mspace{2mu}}%
}

\begin{document}

%-------------------------------------------------------------------------
% editorial commands: to be inserted by the editorial office
%
%\firstpage{1} \volume{228} \Copyrightyear{2004} \DOI{003-0001}
%
%
%\seriesextra{Just an add-on}
%\seriesextraline{This is the Concrete Title of this Book\br H.E. R and S.T.C. W, Eds.}
%
% for journals:
%
%\firstpage{1}
%\issuenumber{1}
%\Volumeandyear{1 (2004)}
%\Copyrightyear{2004}
%\DOI{003-xxxx-y}
%\Signet
%\commby{inhouse}
%\submitted{March 14, 2003}
%\received{March 16, 2000}
%\revised{June 1, 2000}
%\accepted{July 22, 2000}
%
%
%
%---------------------------------------------------------------------------
%Insert here the title, affiliations and abstract:
%

\title[Proof of the Generalization of the Sawayama--Thébault Theorem]
 {Proof of the Generalization of the Sawayama--Thébault Theorem}

%----------Author 1
\author[Miłosz Płatek]{Miłosz Płatek}

\address{%
Independent researcher,\\
Krakow, Poland.}

\email{milosz@platek.org}

%----------Author 2

%----------classification, keywords, date
\subjclass{51M05; 51M04; 51B15}

\keywords{Sawayama--Thébault Theorem; Sawayama Lemma; Thébault's Problem III; Laguerre geometry; Axial Circular Transformation}

\date{January 1, 2004}
%----------additions
%%% ----------------------------------------------------------------------

\begin{abstract}
We prove two conjectures posed in 2016 concerning a generalization of the Sawayama--Thébault Theorem and the Sawayama Lemma. We show that this generalized statement can be viewed in Laguerre geometry, which provides a natural framework for resolving the problem.
\end{abstract}

%%% ----------------------------------------------------------------------
\maketitle
%%% ----------------------------------------------------------------------
%\tableofcontents
\section{Introduction}
Among the classical problems of plane geometry, the Sawayama--Thébault Theorem holds a special place due to its long and remarkable history. The problem was posed by the French mathematician Victor Thébault in 1938 as Problem 3887 in The American Mathematical Monthly~\cite{website}.

In 1963, C. Stanley Ogilvy included the problem in his book \textit{Tomorrow’s Math: Unsolved Problems for the Amateur}~\cite{chou}, classifying it as an open problem in analytic geometry.

\begin{prop}[The Sawayama--Thébault Theorem]
Let $ABC$ be a triangle, and let $D$ be a point on the side $BC$ of triangle $ABC$. Let $O_1$ and $O_2$ ($O_1 \neq O_2$) be the centers of circles tangent to segment $BC$, to segment $AD$, and to the circumcircle of triangle $ABC$. Then, regardless of the choice of point $D$, the line $O_1O_2$ passes through a fixed point $I$, which is the incenter of triangle $ABC$.
\end{prop}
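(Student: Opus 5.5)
We prove this with the classical Sawayama Lemma route. Let $\omega$ be the circumcircle of $ABC$. Let $\omega_1$ be the circle with center $O_1$, lying on the side of $AD$ containing $B$. It touches $BC$ at $E$, touches $AD$ at $F$, and touches $\omega$ internally at $T$. Define $\omega_2$, $O_2$, $G$, $H$, $S$ analogously on the side of $C$.

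\emph{Step 1 (Sawayama Lemma).} We show that the chord $EF$ of $\omega_1$ passes through $I$. The homothety centered at $T$ taking $\omega_1$ to $\omega$ sends $E$ to the midpoint $M$ of the arc $BC$ not containing $A$. Hence $T,E,M$ are collinear. The standard angle chase gives $\triangle MBE\sim\triangle MTB$, so $MB^2=ME\cdot MT$. By the incenter--excenter lemma $MI=MB$, so $MI^2=ME\cdot MT$, which is the power of $M$ with respect to $\omega_1$.

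Let $I'=EF\cap AM$. We show that $A,F,I',T$ are concyclic. Since $T,E,M$ are collinear, $\angle ATE=\angle ATM$, and this is the inscribed angle of $\omega$ over $AM$. Using the tangent--chord angle at $F$ and $E$ (both tangents from $D$ have equal length, so $\triangle DEF$ is isosceles), $\angle AFE$ is expressed in terms of $\angle ADB$. Comparing the two gives $\angle AFI'=\angle ATI'$ in the appropriate configuration. Then, viewing $MT$ as a secant, one gets $MI'\cdot MA = ME\cdot MT = MB^2$. But $MI\cdot MA=MB^2$ cannot hold in general, so the correct target is the power relation $MI'^2=ME\cdot MT$, i.e.\ the circle $(FI'T)$ is tangent to $AM$. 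Checking the concyclicity and tangency carefully is where I expect the main obstacle: the angle chase is configuration-sensitive. I would first try to show $\angle MI'E=\angle MTI'$ directly, with directed angles mod $\pi$ to avoid case splits. This yields $MI'=MB=MI$, and since both $I$ and $I'$ lie on segment $AM$, $I'=I$. Symmetrically, $GH$ passes through $I$.

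\emph{Step 2 (conclusion).} The points $E,F$ are the tangency points from $D$ to $\omega_1$, so $O_1D\perp EF$ and $O_1D$ bisects $\angle ADB$. Likewise $O_2D\perp GH$ and $O_2D$ bisects $\angle ADC$. Hence $O_1D\perp O_2D$, so $EF\parallel DO_2$ and $GH\parallel DO_1$. Let $P=EF\cap DO_1$ and $Q=GH\cap DO_2$. Then $DPIQ$ is a rectangle; we already know $I$ lies on both $EF$ and $GH$.

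In the right triangle $O_1ED$, $P$ is the foot of the altitude from $E$. So $DP/DO_1 = DE^2/DO_1^2$, and similarly $DQ/DO_2=DG^2/DO_2^2$. Since $\triangle O_1ED\sim\triangle DGO_2$ (right angles at $E$ and $G$, and complementary angles at $D$), $DE/DO_1 = O_2G/O_2D$. Therefore
\[
\frac{DP}{DO_1}+\frac{DQ}{DO_2}=\frac{O_2G^2+DG^2}{DO_2^2}=1 .
\]
In coordinates along the perpendicular axes $DO_1$ and $DO_2$, the point $I=(DP,DQ)$ thus lies on the line with intercepts $DO_1$ and $DO_2$, which is the line $O_1O_2$. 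This completes the proof.
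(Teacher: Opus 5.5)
Your overall route is Sawayama's classical one. The paper cites that route but does not carry it out: it treats this statement as known and sets it aside as the degenerate case in Theorem~\ref{th32}. Your Step~2 is correct and clean. With $\alpha=\tfrac12\angle ADB$ you get $DP/DO_1+DQ/DO_2=\cos^2\alpha+\sin^2\alpha=1$, which puts $I$ on $O_1O_2$.

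\textbf{The gap is Step~1, which as written does not prove the Sawayama Lemma (Proposition~\ref{p1}).} Comparing $\angle ATE=\angle ATM$ with $\angle AFE=90^\circ+\tfrac12\angle ADB$ cannot yield $\angle AFI'=\angle ATI'$. Both target angles stand on the chord $AI'$, and $\angle ATI'$ involves the unknown line $TI'$, not $TE$. The secant relation $MI'\cdot MA=ME\cdot MT$ is false, since $E$ does not lie on $(AFI'T)$. The circle that must be tangent to $AM$ is $(EI'T)$, not $(FI'T)$; the latter contains $A$, so it cuts $AM$ twice. Finally, the decisive equality $\angle MI'E=\angle MTI'$ is only announced, not derived.

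The missing idea is to chase angles on the chord $I'T$, where both angles are computable. Use directed angles mod $180^\circ$ and let $\ell_M$ be the tangent to the circumcircle at $M$, which is parallel to $BC$ by the homothety at $T$. Then $\angle(FI',FT)=\angle(FE,FT)=\angle(BC,ET)=\angle(\ell_M,MT)=\angle(AM,AT)=\angle(AI',AT)$, so $A,F,I',T$ are concyclic. Next, $\angle(TI',TA)=\angle(FI',FA)=\angle(FE,AD)=\angle(TE,TF)$ by the tangent--chord angle at $F$. Hence $\angle(TI',TE)=\angle(TA,TF)=\angle(I'A,I'F)=\angle(I'M,I'E)$. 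So $MI'$ is tangent to $(EI'T)$, and $MI'^2=ME\cdot MT=MB^2$. The paper runs the same kind of chase for the excenter variant in Lemma~\ref{lemma1}.

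Even then, $MI'=MB$ only gives $I'\in\{I,J_a\}$, because the $A$-excenter $J_a$ also lies on $AM$ at distance $MB$ from $M$. Your claim that $I'$ lies on segment $AM$ therefore needs an actual configuration argument, namely that $EF$ meets $AM$ on the same side of $BC$ as $A$. Alternatively, simply cite the Sawayama Lemma, as the paper does; with that, your Step~2 completes a valid proof.
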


\begin{figure}[H]
    \centering
    \includegraphics[scale=1.2]{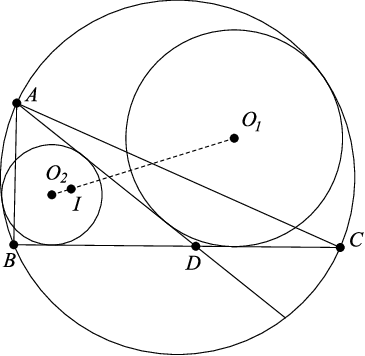}
    \caption{The Sawayama--Thébault Theorem.}
    \label{fig2}
\end{figure}

It was long believed that the first solution was found by K. B. Taylor and published in 1983. His proof consisted of 24 pages of calculations, though only a summary appeared in \textit{The American Mathematical Monthly}~\cite{generalization}. This was the first known solution published in English and, for many years, was widely cited as the earliest proof. Nevertheless, the problem had already been solved in 1973 by Dutch mathematician H. Streefkerk. However, this did not end the search for a more synthetic and elegant solution.

In 1988, Chou~\cite{springer} applied automated theorem proving methods, based on Gröbner bases and pseudo-remainders, to prove the theorem. The original proof required 44 hours of CPU time on a Symbolic 3600 machine. The Sawayama--Thébault Theorem nearly acquired benchmark status in Gröbner basis theory~\cite{t1}.

In 1991, H. Demir and C. Dezar published another version of the proof in \textit{Geometriae Dedicata}, offering a natural solution to the problem and relating it to the classical notable configurations of the triangle~\cite{t6}.

In 2001, R. Shaila published an elementary proof using Cartesian coordinates in \textit{The American Mathematical Monthly}~\cite{t1}. Although the argument was elementary, it still required the use of Maple to compute a root of the quartic equation. As we have noted, there was considerable misconception surrounding the first proof of this theorem. When Shaila published his paper, he stated that he was not aware of any published elementary proof of Thébault’s Theorem.

Finally, in 2003, after 65 years, a short and synthetic proof was published by Jean-Louis Ayme~\cite{circular}. Ayme discovered that Yuzaburo Sawayama, an instructor at the Central Military School in Tokyo, had independently formulated and solved the problem as early as 1905 in his academic work \cite{historia}. Since then, the theorem has been known as the Sawayama--Thébault Theorem, and the lemma used in Sawayama’s original proof is referred to as the Sawayama Lemma.

\begin{prop}\label{p1}
Let $ABC$ be a triangle, and let $D$ be a point on the side $BC$ of triangle $ABC$. Let $\omega$ be a circle tangent to segment $BC$ at point $E$, to segment $AD$ at point $F$, and to the circumcircle of triangle $ABC$. Then, regardless of the choice of point $D$, the line $EF$ passes through a fixed point $I$, which is the incenter of triangle $ABC$.
\end{prop}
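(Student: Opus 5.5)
Let $\Omega$ denote the circumcircle of $ABC$ and let $T$ be the point of tangency of $\omega$ with $\Omega$; $\omega$ is internally tangent to $\Omega$. The plan is the classical Sawayama argument: identify the line $TE$, then find a second point on $EF$ and show it is the incenter.

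\textbf{Step 1: $TE$ passes through the arc midpoint.} The homothety centered at $T$ taking $\omega$ to $\Omega$ maps $E$ to the point of $\Omega$ where the tangent is parallel to $BC$. That point is the midpoint $M$ of the arc $BC$ not containing $A$, since $\omega$ lies on the same side of $BC$ as this arc. Hence $T,E,M$ are collinear.

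\textbf{Step 2: a power-of-a-point identity.} Since $MB=MC$, we have $\angle MBE=\angle MCB=\angle MTB$. Therefore triangles $MBE$ and $MTB$ are similar, which gives
\[
ME\cdot MT = MB^2 .
\]
So the power of $M$ with respect to $\omega$ equals $MB^2$.

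\textbf{Step 3: locate the candidate point and use the incenter--excenter lemma.} Let the line $EF$ meet $AM$ at a point $I'$. By the incenter--excenter lemma, $MB=MI$ where $I$ is the incenter, and $I$ lies on $AM$. It therefore suffices to show $MI'^2 = ME\cdot MT$, because this gives $MI'=MB=MI$, and $I'$, $I$ both lie on segment $AM$, so $I'=I$.

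Equivalently, we must show that $MI'$ is tangent to the circle $(TEI')$, i.e. $\angle MI'E=\angle I'TE$. This is the main obstacle. I would show that $A,T,F,I'$ are concyclic. The tangent chord angles give $\angle(TF,FE)$ in terms of the tangent $AD$ and $\angle TEF$, and the inscribed angle gives $\angle TAM=\angle TBM$. Chasing angles, $\angle FI'A$ equals the angle between the lines $EF$ and $AM$. Since $DE=DF$, $\angle DEF = 90^\circ-\tfrac12\angle ADB$, while $AM$ makes the angle $\angle AMT$-related quantity with $BC$. The angle $\angle FTE$ equals $\angle DEF$ by the tangent--chord angle, and combining these gives $\angle FTA=\angle FI'A$, so $A,F,I',T$ are concyclic.

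From this concyclicity, $\angle I'TF=\angle I'AF$. Adding $\angle FTE=\angle FEB$, a direct angle sum yields $\angle I'TE=\angle MI'E$, which is the required tangency. Thus $MI'^2=ME\cdot MT=MB^2$, and $I'=I$.

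I expect the concyclicity of $A,F,I',T$ to be the delicate part, specifically keeping orientations consistent across the two positions of $\omega$ (on either side of $AD$). I would use directed angles modulo $180^\circ$ throughout to handle both configurations at once.
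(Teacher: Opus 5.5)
The paper does not reprove this classical statement. Its proof of Lemma~\ref{lemma1}, however, is the Sawayama argument in the mirrored configuration, and your outline is that argument run backwards.

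\begin{itemize}
\item The paper starts from the fixed point ($J_c$ there, $I$ here) and takes $F'$ as the second intersection of line $IE$ with $\omega$. It proves $A,F',I,T$ concyclic, uses $MI^2=MB^2=ME\cdot MT$ to get a tangency, and concludes that $AF'$ touches $\omega$.
\item You start from $F$, set $I'=EF\cap AM$, and derive $MI'^2=ME\cdot MT$.
\end{itemize}

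Step 2 is fine. Your tangency chase is also fine: in directed angles, $\angle(TI',TE)=\angle(AM,AD)+\angle(AD,EF)=\angle(I'M,I'E)$, using that $AD$ touches $\omega$ at $F$.

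The concyclicity you call the main obstacle can be finished along your lines; the missing identity is $\angle(BC,AM)=\angle(CM,CA)$. But it is a one-liner from the homothety you already have, exactly as in the paper: it gives $\angle(FT,FE)=\angle(AT,AM)$, i.e.\ $\angle(FT,FI')=\angle(AT,AI')$.

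Your reason in Step 1 is backwards. $\omega$ lies on the same side of $BC$ as $A$ (it touches segment $AD$), and that is why $E$ goes to the midpoint of the arc \emph{not} containing $A$. Were $\omega$ on the side of that arc, you would get the midpoint of arc $BAC$, which is the situation of Lemma~\ref{lemma1}.

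The step that is actually missing is the last line of Step 3. The relation $MI'^2=MB^2$ only places $I'$ at one of the two points of line $AM$ at distance $MB$ from $M$. One is the incenter $I$. The other is the $A$-excenter $J_a$, which also satisfies $MJ_a=MB$ and lies on ray $AM$ beyond $M$. Your claim that $I'$ lies on segment $AM$ is precisely what must be shown, and nothing before it implies it. $I'$ is just the intersection of two lines, and a priori the part of $EF$ beyond $E$ could pass through $J_a$.

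One way to close it is by continuity, anchored at $D=L$ where $L=AM\cap BC$:
\begin{itemize}
\item For each of the two circles (on either side of $AD$), $I'$ depends continuously on $D$ and lies in the two-point set $\{I,J_a\}$, so it is constant.
\item The lines $EF$ and $AM$ never become parallel. $EF$ is perpendicular to the bisector of $\angle FDE$, whose direction varies monotonically with $D$, so a parallel position would be isolated. Near it $I'$ would leave every bounded set, which is impossible.
\item At $D=L$ you get $I'=F$. This point is on segment $AD$, on the same side of $BC$ as $A$, so $I'=I$ for all $D$.
\end{itemize}

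The paper's direction has the mirror-image ambiguity: when it concludes $F'=F$, it must choose between the two tangents from $A$ to $\omega$. So reversing the argument relocates this configuration step rather than removing it.
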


\begin{figure}[H]
    \centering
    \includegraphics{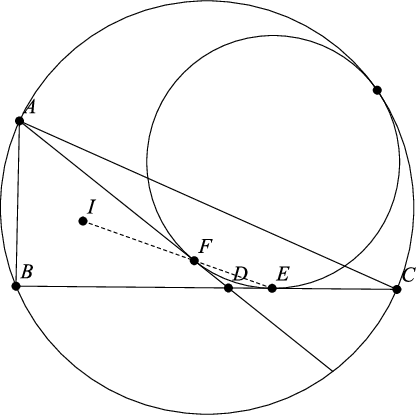}
    \caption{The Sawayama Lemma.}
    \label{fig1}
\end{figure}

Over the years, many of papers on the Sawayama--Thébault Theorem have been published in various mathematical journals \cite{t8,lemma,problem,t9,t2,t5,t10,proof,t3}. The proof has gradually been simplified to a one-page synthetic version based on the Sawayama Lemma, attributed to Y. Sawayama.

Vietnamese mathematician Dao Thanh Oai worked for four years on a generalization of the Sawayama--Thébault Theorem. Many attempts had previously been made to generalize the Sawayama--Thébault Theorem and the Sawayama Lemma. In particular, before discovering his stronger result, Dao proposed another idea for extending the Sawayama Lemma and the Sawayama--Thébault Theorem~\cite{olddao}. One of these results --- the generalization of the Sawayama Lemma --- was later proved by Nguyen Chuong Chi~\cite{nguyen}. However, these generalizations were complicated and fell short of what could be expected from a true generalization. Moreover, in 2002, a version of the theorem involving external circles was proved by S. Gueron and published in \textit{The American Mathematical Monthly}\cite{external}, but this also cannot be regarded as a genuine generalization. After working for four years, Dao finally discovered what could be described as a genuine generalization of the Sawayama--Thébault Theorem and the Sawayama Lemma. This conjecture has remained open until now, for nearly ten years, as confirmed by Dao Thanh Oai.

In this paper, we present proofs of the generalization of the Sawayama Lemma and the generalization of the Sawayama--Thébault Theorem.

A key idea in obtaining the presented proof is to view the problem in terms of Laguerre geometry \cite{laguerre}. The specific Laguerre transformation used is a dilation, which is a special case of an axial circular transformation. This technique is described in \textit{Geometric Transformations IV: Circular Transformations}~\cite{t4}.

In summary, this paper presents proofs of two conjectures: the generalization of the Sawayama--Thébault Theorem and the generalization of the Sawayama Lemma~\cite{t7}. These proofs are synthetic, and their degenerate cases correspond to the classical versions of both theorems, thereby generalizing the results established in the cited papers.

\section{Generalization of the Sawayama Lemma}

In this section, we present a proof of the generalization of the Sawayama Lemma. The conjecture \cite{t7} was proposed by Dao Thanh Oai from Vietnam, who worked on it for four years. We will use the following lemma, which is a variation of the Sawayama lemma. To the author's surprise, it has not been presented in any known source, although the proof can be conducted analogously to the classical Lemma.

\begin{lem}\label{lemma1}
(Sawayama lemma in an alternative configuration)  
Let triangle $ABC$ be given and let point $D$ lie on segment $BC$. Let $\omega$ be a circle tangent to the circumcircle at point $P$, to line $BC$ at point $E$, and to line $AD$ at point $F$, such that points $A$ and $P$ lie on opposite sides of line $BC$, and points $B$ and $P$ lie on the same side of line $AD$. Then, regardless of the choice of point $D$, the line $EF$ passes through point $J_c$, the center of the $C$-excircle of triangle $ABC$.
\end{lem}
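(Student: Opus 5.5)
We adapt the classical proof of the Sawayama Lemma (Proposition \ref{p1}). In that proof one shows that the line $EF$ meets the circumcircle $\Gamma$ again at a point, and that this point is the arc midpoint $M$ of the arc $BC$ not containing $A$; then $MB=MC=MI$ identifies the incenter on $EF$. Here the arc midpoint will be replaced by the midpoint $N$ of arc $AB$ containing $C$. This is the point of $\Gamma$ on the external bisector of angle $C$, and the incenter–excenter lemma gives $NA=NB=NJ_c$. Write $\Gamma$ for the circumcircle and $P$ for the tangency point.

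\textbf{Step 1 (homothety at $P$).} The homothety centered at $P$ taking $\omega$ to $\Gamma$ sends $E$ to the midpoint of the arc of $\Gamma$ cut off by $BC$ on the side of $P$. Since $P$ and $A$ lie on opposite sides of $BC$, this is the arc not containing $A$, so $PE$ passes through $M$. Hence $ME\cdot MP=MB^2$. For the same reason, $PF$ passes through the midpoint $N'$ of the arc cut off by line $AD$ on the side of $P$. Here I must check, with the hypothesis that $B$ and $P$ lie on the same side of $AD$, which arc this is, and which of the tangent directions is the relevant one.

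\textbf{Step 2 (the key concyclicity).} Let $K=EF\cap PM'$, where the aim is to realize $K$ as the second intersection of line $EF$ with a specific line through $P$. The classical argument shows that $P,F,K,A$ (or $P,F,$ the relevant point, $D$) are concyclic by an angle chase using the tangent line at $P$. I would instead follow a cleaner route. Let $N$ be the arc midpoint of $AB$ on the side of $C$ and let $X=EF\cap CN$. Using tangency of $\omega$ with $BC$ and $AD$ and the common tangent at $P$, I will show by angle chasing that $\angle PFX=\angle PAX$, so $P,F,A,X$ are concyclic. Similarly $\angle PEX=\angle PCX$ gives $P,E,C,X$ concyclic, the analogue of the classical "$PEIC$ cyclic" step.

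\textbf{Step 3 (conclusion).} From $P,E,X,C$ concyclic together with $\angle EPC$ being determined by the homothety, I deduce $\angle XCE=\tfrac12(180^\circ-\angle C)$. Thus $CX$ is the external bisector of $\angle C$. In parallel, the power-of-a-point relation from $P,F,A,X$ concyclic gives the angle $\angle XAB=\tfrac12(180^\circ-\angle A)$, or equivalently a length condition $NX=NA$. Hence $X$ lies on two external bisectors and equals $J_c$, and $J_c$ lies on $EF$. In the end it suffices to prove one of the two cyclicities plus one bisector condition. Alternatively, one can show directly that $\angle FEC$ equals the angle between $EJ_c$ and $BC$ computed from triangle $EJ_cC$.

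\textbf{Main obstacle.} The hard part is the angle chase in Step 2 under the nonstandard configuration. The positions of $P$, $E$, $F$ relative to the lines flip compared with the classical lemma, so some angles become supplements. The side conditions in the statement are exactly what fix these orientations. I would first handle this with directed angles modulo $180^\circ$ to avoid case splitting. If that becomes opaque, I would fall back on verifying that the reflection or inversion swapping the incircle and excircle configurations, namely the inversion at $N$ with radius $NA$, maps the configuration to the classical one.
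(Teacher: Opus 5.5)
Transplanting the classical Sawayama argument is the right idea, and it is what the paper does. But the points you carry over are the wrong ones, so the angle chase you postpone to Step 2 has nothing correct to work with.

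\textbf{Where it fails.} In Step 1 the homothety at $P$ sends $E$ to the arc midpoint on the side of $BC$ \emph{opposite} to $P$. The reason is that $E$ lies on the chord $BC$, inside $\Gamma$, so it sits strictly between $P$ and its image on ray $PE$. Since $P$ and $A$ are on opposite sides of $BC$, the image is the midpoint $M$ of arc $BAC$. For the midpoint of the arc not containing $A$, the points $P,E,M$ are not collinear and $ME\cdot MP=MB^2$ is false. The same side error affects your claim about $PF$. In your summary of the classical proof, too, it is $PE$, not $EF$, that passes through the arc midpoint.

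Next, $J_c$ lies on the \emph{internal} bisector of $\angle C$ and on the external bisectors of $\angle A$ and $\angle B$. The midpoint of arc $AB$ containing $C$ is equidistant from $A,B,J_a,J_b$, not from $J_c$; the point with $NA=NB=NJ_c$ is the midpoint of the arc not containing $C$. A point on the external bisectors of $\angle C$ and $\angle A$ is $J_b$, so Step 3 would identify the wrong excenter. In any case, since $X$ is defined on $CN$, ``deducing'' that $CX$ is the external bisector is circular.

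Finally, the cyclicities in Step 2 are only announced, and $P,E,C,J_c$ (your intended $X$) is not concyclic in general. With the correct $M$ one has $MJ_c^2=ME\cdot MP$, hence $\angle MPJ_c=\angle MJ_cE$. Concyclicity would then force the line $EJ_c$ to make a fixed angle with the fixed line $MJ_c$, which fails as $E$ moves with $D$. The inversion fallback does not rescue this either: inversion at an arc midpoint of $AB$ with radius $NA$ sends $\Gamma$ to the line $AB$, not to a classical configuration.

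\textbf{What is missing.} A single point does all the work: $M$, the midpoint of arc $BAC$.
\begin{itemize}
\item It lies on $PE$, by the homothety.
\item It lies on the external bisector of $\angle A$, which is the line $AJ_c$.
\item $MJ_c=MB$, because $B$ and $C$ lie on the circle with diameter $J_bJ_c$, whose center is $M$.
\end{itemize}
Hence $MJ_c^2=ME\cdot MP$, so $MJ_c$ is tangent to the circle $(J_cEP)$. Now let $F'\neq E$ be the second intersection of $EJ_c$ with $\omega$. The homothety gives $\angle PF'E=\angle PCM=\angle J_cAP$, so $P,F',A,J_c$ are concyclic. Combining this with the tangent--chord angle at $J_c$ gives $\angle EPF'=\angle J_cF'A$. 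So $AF'$ is tangent to $\omega$, and therefore $F'=F$. This is the paper's proof; no arc midpoint of $AB$ is needed.
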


\begin{figure}[H]
    \centering
    \includegraphics{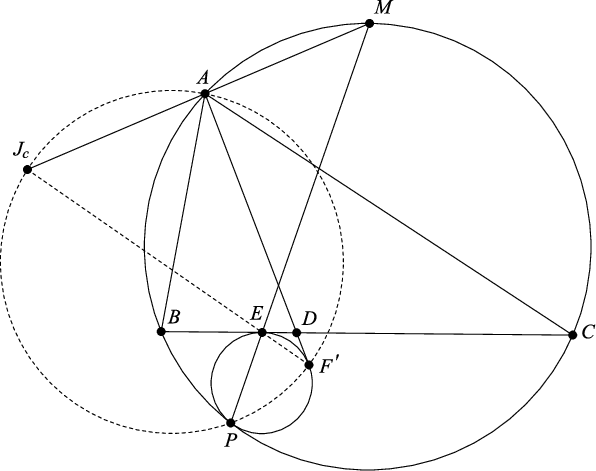}
    \caption{The Sawayama Lemma in an alternative configuration.}
    \label{fig3}
\end{figure}

   \begin{proof}
Let point $F' \neq E$ be the intersection of line $J_cE$ with the circle $\omega$. Let point $M$ be the midpoint of arc $BAC$. Then points $P$, $E$, and $M$ are collinear, since the homothety centered at $P$ that sends the circle $\omega$ to the circumcircle of $\triangle ABC$ maps $E$ to $M$. Again, from the homothety centered at point $P$, we obtain:
\begin{equation}
    \arcangle PF'E = \arcangle PCM = \arcangle J_cAP.
\end{equation}
Thus,
\begin{equation}
    \arcangle J_cAP = \arcangle J_cF'P,
\end{equation}
so quadrilateral $PF'AJ_c$ is cyclic. Moreover, we have
\begin{equation}\label{eq:angles}
    \arcangle MBC = \arcangle BCM = \arcangle MPB.
\end{equation}
Therefore, $MB$ is tangent to the circumcircle of $\triangle BEP$ at $B$. 
By the Power of a Point theorem (see \cite{chen}), we obtain
\begin{equation}\label{eq:power}
    MB^{2} = ME \cdot MP.
\end{equation}
Furthermore, by a variant of the Incenter–Excenter Lemma (see \cite{chen}), we have $MJ_c=MB$. Hence,
\begin{equation}
    ME \cdot MP = MB^2 = MJ_c^2,
\end{equation}
so line $MJ_c$ is tangent to the circumcircle of triangle $J_cEP$.
        
\noindent Therefore,
\begin{equation}
    \arcangle MJ_cE = \arcangle J_cPM = \arcangle ACM + \arcangle J_cF'A.
\end{equation}
However, we also have:
\begin{equation}
    \arcangle MJ_cE = \arcangle APF' = \arcangle ACM + \arcangle EPF'.
\end{equation}
Combining these, we get:
\begin{equation}
    \arcangle ACM + \arcangle EPF' = \arcangle ACM + \arcangle J_cF'A,
\end{equation}
\begin{equation}
    \arcangle  EPF'= \arcangle J_cF'A.
\end{equation}
Hence, line $AF'$ is tangent to circle $\omega$, so $F = F'$, which completes the proof.
\end{proof} 

We will prove one more lemma that will be useful in the proof of the generalization of the Sawayama Lemma.

\begin{lem}\label{lemma2}
Let $ABC$ be a triangle, and let $\omega_a$ be a circle tangent to lines $AC$ and $AB$. Let $\Omega$ be a circle passing through points $B$ and $C$.  Moreover,
  \begin{enumerate}
      \item if the circle $\omega_a$ lies on the opposite side of line $AC$ from point $B$, then the circle $\Omega$ is internally tangent to $\omega_a$,
      \item if the circle $\omega_a$ lies on the same side of line $AC$ as point $B$, then the circle $\Omega$ is externally tangent to $\omega_a$.
  \end{enumerate}
Let $A'$ be the point of tangency of the circles $\omega_a$ and $\Omega$, and let $I$ be the incenter of $\triangle ABC$. Then the line $A'I$ is the angle bisector of $\angle BA'C$.
\end{lem}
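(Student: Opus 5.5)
The plan is to reduce the bisector property to a ratio of lengths. By the angle bisector theorem, $A'I$ bisects $\angle BA'C$ exactly when $I$ lies on the line $A'D$, where $D\in BC$ is the point with $BD/DC=A'B/A'C$. So I will compute $A'B/A'C$ from the tangency data, and then show separately that $A'$, $I$, $D$ are collinear. I expect the second part to be the main obstacle.

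\textbf{Step 1: the ratio $A'B/A'C$.} Let $K$ and $L$ be the points where $\omega_a$ touches lines $AB$ and $AC$, so $AK=AL$. Let $\rho$ and $R$ be the radii of $\omega_a$ and $\Omega$. For a point $X$ on $\Omega$, the tangent length from $X$ to $\omega_a$ equals $XA'\sqrt{1\mp \rho/R}$. The sign is $-$ for internal tangency (case 1) and $+$ for external tangency (case 2).

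This follows from the homothety centred at $A'$ that sends $\omega_a$ to $\Omega$. The line $XA'$ meets $\omega_a$ again at a point $Y$ with $A'Y/A'X=\rho/R$. Hence the power of $X$ with respect to $\omega_a$ is $XA'\cdot XY = XA'^2(1\mp\rho/R)$.

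Applying this to $X=B$ and $X=C$ gives
\[
\frac{A'B}{A'C}=\frac{BK}{CL}.
\]
It follows that the bisector of $\angle BA'C$ meets $BC$ at the point $D$ with $BD/DC=BK/CL$. The two cases of the statement only change whether $A'$ and $\omega_a$ lie on the same or opposite sides of $BC$. They also decide whether $BK=|AB-AK|$ and $CL=|AC-AK|$ are differences or sums. I will keep this bookkeeping with signed lengths, so that one computation covers both cases.

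\textbf{Step 2: collinearity of $A'$, $I$, $D$ (main obstacle).} I plan a Sawayama-type argument in the style of Lemma~\ref{lemma1}. Let $M$ be the midpoint of the arc $BC$ of $\Omega$ not containing $A'$. Since $MB=MC$, the bisector of $\angle BA'C$ is the line $A'M$, so the goal becomes: $A'$, $I$, $M$ are collinear.

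The homothety at $A'$ sends the tangent line $AB$ of $\omega_a$ to the parallel tangent line of $\Omega$, and sends $K$ to its point of contact. The same holds for $AC$ and $L$. This gives collinearities of $A'$, $K$ and of $A'$, $L$ with points of $\Omega$ determined by the directions of $AB$ and $AC$. I then want to verify that $A'I$ hits $M$ by computing in angles $\arcangle$, as in the proof of Lemma~\ref{lemma1}. The needed facts are that $A$, $I$ and the centre of $\omega_a$ lie on one line, and that $IK$ and $IL$ make angles $90^\circ-\tfrac{A}{2}$ with that line.

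If the synthetic chase becomes messy, I will fall back on trigonometry. The law of sines in triangles $A'BI$ and $A'CI$ gives
\[
\frac{\sin\angle BA'I}{\sin\angle CA'I}=\frac{BI\,\sin\angle A'BI}{CI\,\sin\angle A'CI}.
\]
The angles $\angle A'BI$ and $\angle A'CI$ can be expressed through $\tfrac{B}{2}$, $\tfrac{C}{2}$ and the arcs of $\Omega$. I would then check that this quotient equals $1$, using $BI=\dfrac{r}{\sin(B/2)}$, $CI=\dfrac{r}{\sin(C/2)}$ and the ratio $BK/CL$ from Step~1.

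Step 2 is where the actual content lies. Its special case, with $\Omega$ the circumcircle and $\omega_a$ the mixtilinear incircle, is the known fact that $A'I$ passes through the arc midpoint of $BAC$, so that is the model to imitate.
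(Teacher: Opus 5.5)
Your Step 1 is correct. The power computation gives $A'B/A'C=BK/CL$, so the internal bisector of $\angle BA'C$ meets $BC$ at the point dividing it in the ratio $BK:CL$. But this cannot carry the proof. The identity holds for \emph{both} circles through $B$ and $C$ tangent to $\omega_a$, because the factor $\sqrt{1\mp\rho/R}$ cancels either way, whereas the conclusion of the lemma holds for only one of them.

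For example, take $A=(0,0)$, $B=(4,0)$, $C=(1,3)$ and $\omega_a$ centred on $AI$ at distance $1$ from $A$. The externally tangent circle through $B,C$, which is the one prescribed by case~2, does satisfy the lemma. For the internally tangent one, the line $A'I$ misses both bisectors of $\angle BA'C$ (by almost $4^\circ$ from the internal one), although $A'B/A'C=BK/CL$ holds there too. So the configuration hypothesis must be used somewhere Step 1 cannot see, namely in Step 2, which you do not carry out.

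The synthetic version of Step 2 only restates the goal: "$A'$, $I$, $M$ collinear" is the lemma itself. It then announces an angle chase without performing it. One of the facts you list for that chase is false: $IK$ and $IL$ make angle $90^\circ-\tfrac{A}{2}$ with $AI$ only when $K,L$ are the incircle's contact points; it is $O_aK$ and $O_aL$ that do. The trigonometric fallback does not close the gap either. To express $\angle A'BI$ and $\angle A'CI$ you need the position of $A'$ on $\Omega$, i.e.\ $\Omega$ as a function of $\omega_a$ through the tangency condition, and you never say how to obtain it.

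The missing idea, which the paper supplies, is that $A'$ is the second intersection point of the circles $(BKI)$ and $(CLI)$; that is, $A'KIB$ and $A'LIC$ are cyclic. The paper obtains this from Lemma~\ref{lemma1} applied to triangle $BCX$, where $X$ is the second intersection of line $AC$ with $\Omega$:
\begin{enumerate}
\item the chord $KL$ passes through the $C$-excenter $J_c$ of $BCX$, which lies on line $CI$;
\item $J_cBKA'$ is cyclic by the proof of that lemma;
\item the computation $\arcangle J_cIB=90^\circ-\tfrac{A}{2}=\arcangle J_cKB$ shows that $J_cBIK$ is cyclic as well.
\end{enumerate}
This is where the specific tangency configuration enters. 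Once the two cyclic quadrilaterals are known, the bisector property is a short chase. In directed angles, $\angle(A'B,A'I)=\angle(KA,KI)$ and $\angle(A'I,A'C)=\angle(LI,LA)$, and these agree because $K$ and $L$ are mirror images in $AI$. The paper does the equivalent chase through the midpoint $N$ of arc $XBC$.
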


\begin{proof}
To simplify the proof, we adopt the first configuration. The proof for the second configuration is analogous.

\begin{figure}
    \centering
    \includegraphics[scale=1.1]{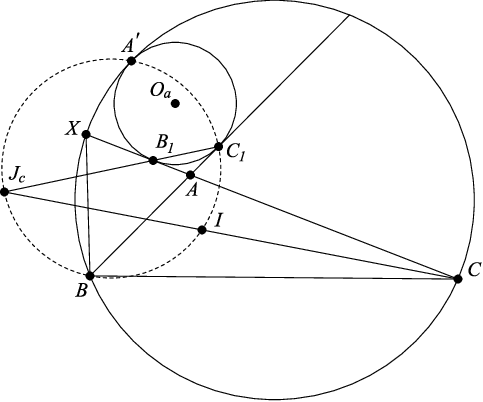}
    \caption{$A'I$ is the angle bisector of angle $BA'C$.}
    \label{fig4}
\end{figure}

Let points $B_1$ and $C_1$ be the points of tangency of lines $AC$ and $AB$ with circle $\omega_a$, respectively. Denote by $X \neq C$ the intersection point of line $AC$ with circle $\Omega$. Moreover, let line $CI$ intersect line $B_1C_1$ at point $J_c$. From Lemma \ref{lemma1} we know that $J_c$ is the excenter of triangle $BCX$ opposite vertex $C$. Furthermore, from the proof of Lemma \ref{lemma1}, we have that quadrilateral $J_cBC_1A'$ is cyclic. Also, note that:

\begin{equation}
\begin{split}
    \arcangle J_cIB &=\arcangle IBC + \arcangle ICB \\
    &= \frac{1}{2} (180^{\circ} - \arcangle BAC)\\
    &= \arcangle B_1C_1A\\
    &= \arcangle J_cC_1B.
\end{split}
\end{equation}
Therefore
\begin{equation}
    \arcangle J_cIB = \arcangle J_cC_1B,
\end{equation} 
and hence quadrilateral $J_cBIC_1$ is cyclic. Combining the above, we get that pentagon $J_cBIC_1A'$ is cyclic, and in particular, quadrilateral $A'C_1IB$ is cyclic. Analogously, we show that $A'B_1IC$ is cyclic.

Let point $N$ be the midpoint of arc $XBC$ (Figure \ref{fig5}). Then from the proof of the Lemma \ref{lemma1}, points $A'$, $B_1$, and $N$ are collinear.

\begin{figure}
    \centering
    \includegraphics{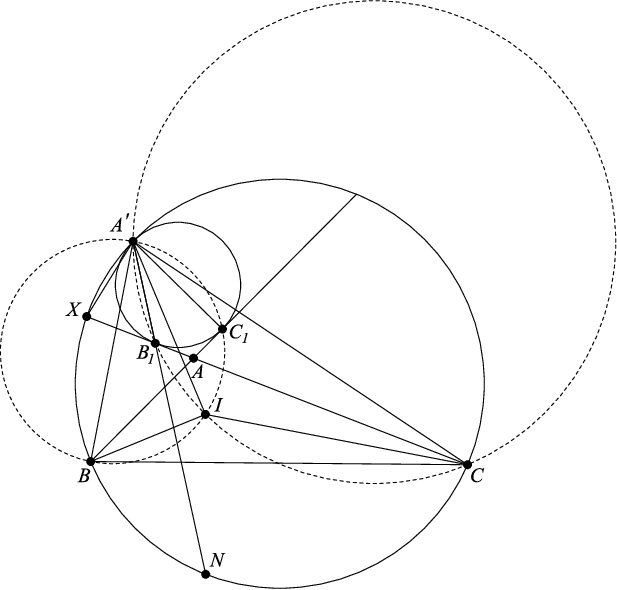}
    \caption{Cyclicity of quadrilaterals $A'C_1IB$ and $AB_1IC$.}
    \label{fig5}
\end{figure}

Therefore, we obtain:
\begin{equation}
    \begin{split}
        \arcangle BA'B_1 &= \frac{1}{2}\arcangle XA'C - \arcangle XA'B \\ &= \frac{1}{2} \arcangle XA'B + \frac{1}{2} \arcangle BA'C - \arcangle XA'B \\
        &= \frac{1}{2}\arcangle BA'C - \frac{1}{2} \arcangle XCB \\ &=\frac{1}{2}\arcangle BA'C - \arcangle B_1CI \\ &= \frac{1}{2}\arcangle BA'C - \arcangle B_1A'I.
    \end{split}
\end{equation}
Thus, 
\begin{equation}
\begin{split}
      \arcangle BA'I &=  \arcangle BA'B_1 + \arcangle B_1A'I  \\ &=  \frac{1}{2}\arcangle BA'C - \arcangle B_1A'I + \arcangle B_1A'I  \\ &= \frac{1}{2}\arcangle BA'C.
\end{split}
\end{equation}
Hence, $A'I$ is the angle bisector of angle $\arcangle BA'C$, as claimed.
\end{proof}

We now present the proof of the generalization of the Sawayama Lemma---the first of the two main results of this paper. 

\begin{thm}[Generalized Sawayama Lemma]\label{saw}
In triangle $ABC$, let $\omega_a$ be a circle tangent to sides $AB$ and $AC$, and let $\Omega$ be a circle passing through $B$ and $C$ and tangent to $\omega_a$. Let $\omega$ be a circle tangent to segment $BC$ at $D$, to $\Omega$, and to a line $k$ tangent to $\omega_a$ at $E$. Then, for any choice of line $k$, the line $DE$ always passes through a fixed point $I$, the incenter of triangle $ABC$.

\end{thm}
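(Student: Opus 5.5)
The plan is to use Laguerre geometry, as the introduction suggests. We apply an axial circular transformation (a Laguerre dilation) that shrinks every oriented circle's radius by the radius $r_a$ of $\omega_a$, so that $\omega_a$ becomes a point.

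Concretely, we orient all circles and lines so that each tangency in the configuration is an oriented contact. This covers $\omega$ with $BC$, $\omega$ with $k$, $\omega$ with $\Omega$, $\omega_a$ with $AB$, $AC$, $k$, and $\Omega$ with $\omega_a$. A dilation by distance $r_a$ moves every oriented line parallel to itself by $r_a$ and changes every radius by $r_a$, and it preserves oriented contact. Under it:
\begin{itemize}
\item $\omega_a$ becomes a point $A^*$;
\item lines $AB$, $AC$, $k$ become lines through $A^*$;
\item $BC$ becomes a line $B^*C^*$ parallel to $BC$;
\item $\Omega$ becomes a circle $\Omega^*$ passing through $A^*$;
\item $\omega$ becomes a circle $\omega^*$ tangent to $\Omega^*$, to $B^*C^*$, and to the line $k^*$ through $A^*$.
\end{itemize}
Let $B^*$ and $C^*$ be the intersections of the shifted side lines, and $D^*$ the image tangency point.

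The aim is to recognise this image as an instance of the classical Sawayama lemma (Proposition~\ref{p1}) or of Lemma~\ref{lemma1}, depending on orientation. This needs $\Omega^*$ to be the circumcircle of $A^*B^*C^*$, and this is the first step where something must be checked. The circle $\Omega$ is not tangent to $AB$ or $AC$, so its image is not automatically determined by these data. I would therefore work backwards. Take the triangle $A^*B^*C^*$ cut out by the shifted lines and its circumcircle $\Gamma^*$, and apply the inverse dilation. Then $\Gamma^*$ becomes a circle of radius $R^*\pm r_a$ tangent to $\omega_a$. It remains to show that this circle passes through $B$ and $C$.

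I expect this is where Lemma~\ref{lemma2} enters. The circles through $B$ and $C$ tangent to $\omega_a$ are at most two, one for each configuration in Lemma~\ref{lemma2}. For such a circle, Lemma~\ref{lemma2} says the tangency point $A'$ sees $BC$ with bisector through $I$. Meanwhile the dilation preserves the incenter direction: $I$ is the centre of a circle oriented-tangent to all three sides, so it corresponds to the incenter $I^*$ of $A^*B^*C^*$, with the same centre and radius changed by $r_a$. Matching these identifies $\Omega$ with the preimage of $\Gamma^*$.

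With that identification, Proposition~\ref{p1} (or Lemma~\ref{lemma1} in the other orientation) gives that the line through $D^*$ and the contact point of $\omega^*$ with $k^*$ passes through $I^*$. This holds even in the degenerate case where the contact point of $k^*$ is replaced by the tangency on the cevian $A^*D'$ with $D'\in B^*C^*$.

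We then pull back to the original configuration. The dilation moves each tangency point along the normal, that is, along the radius direction. So $D$ is the foot of the perpendicular from the centre $O_\omega$ of $\omega$ to $BC$, and $D^*$ is the corresponding foot for $O_\omega$ with a different radius. Both lie on the same normal line through $O_\omega$. Similarly, $E$ and the contact point $E^*$ of $k^*$ lie on the normal to $k$ through $A^*$. The center $A^*$ coincides with the centre of $\omega_a$, and $E$ is the tangency point of $k$ with $\omega_a$, so $E$ lies on the segment from $A^*$ perpendicular to $k$, at distance $r_a$.

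I expect the main obstacle to be translating ``$D^*E'^*$ passes through $I^*$'' into ``$DE$ passes through $I$''. These lines are not images of each other, because the dilation is not a point map. My plan is to show both statements are equivalent to a common metric condition. By the standard Sawayama homothety argument, collinearity of $I$ with the two tangency points is equivalent to $I$ lying on the chord of contact of $\omega$ from the intersection $k\cap BC$. Equivalently, $I$ has equal tangent-type distances, since $ID/IE$ equals the ratio of the sines of the angles that the chord makes with the two tangent lines. I would express this as the statement that the power-like quantity $\operatorname{dist}(I,BC)-\operatorname{dist}(I,k)\cdot(\ldots)$ matches, and check that it changes consistently under the shift by $r_a$. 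Here $\operatorname{dist}(I,BC)$ and $\operatorname{dist}(I,k)$ shift by $r$-dependent amounts and the radius of $\omega$ shifts by $r_a$. If this bookkeeping fails, the fallback is to redo the proof of Lemma~\ref{lemma1} directly in the original configuration. That means replacing the point $A$ by the circle $\omega_a$, and using the cyclic pentagon and the collinearity of $A'$, $B_1$, $N$ from the proof of Lemma~\ref{lemma2} as the analogues of the homothety facts.
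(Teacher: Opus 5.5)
There is a genuine gap. The step your reduction rests on is false, and the identification of the fixed point with $I$ is missing entirely. The false step is that $\Omega^*$ is the circumcircle $\Gamma^*$ of the triangle $A^*B^*C^*$ cut out by the three shifted side lines, so there is nothing for Lemma~\ref{lemma2} to prove.

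A point of $\Omega$ is a point-circle in contact with $\Omega$. The dilation turns $B$ into the circle of radius $r_a$ about $B$, which touches $\Omega^*$ and the two shifted lines through $B^*$; nothing makes $\Omega^*$ pass through $B^*$. Concretely, take $A=(0,1)$, $B=(-1,0)$, $C=(1,0)$, $\omega_a$ of radius $0.1$ in the vertical angle at $A$, and $\Omega$ internally tangent to it. Then $\Omega^*$ meets the shifted line $BC$ (here $y=0.1$) at $x\approx\pm0.916$, whereas the shifted lines $AB$, $AC$ meet that line at $x\approx\pm1.041$. Moreover, there the circle centered at $I$ is not in oriented contact with all three oriented side lines (the $A$-excircle is), so ``$I$ corresponds to $I^*$'' fails too.

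The paper never uses the images of $AB$ and $AC$. It lets $B'$, $C'$ be the intersections of $\Omega^*$ with the shifted line $BC$. Since $\Omega^*$ passes through the point $O_a$ to which $\omega_a$ shrinks, it is the circumcircle of $O_aB'C'$ by construction, and $k^*$ is a cevian through $O_a$. The classical Sawayama lemma then says every line $D^*E^*$ passes through the incenter $I''$ of $O_aB'C'$. This point is independent of $k$ but is not $I$.

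From there, one half is easy and the other half is missing from your plan.

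First, $E$ must be the contact point of $\omega$ with $k$, not of $\omega_a$ with $k$. With your reading, the degenerate case $r_a=0$ would assert that $DA$ passes through $I$. So $D$ and $D^*$ lie on the normal from the center of $\omega$ to $BC$, and $E$ and $E^*$ lie on its normal to $k$. The homothety at that center gives $DE\parallel D^*E^*$, while $\overrightarrow{D^*D}$ is a fixed vector of length $r_a$ perpendicular to $BC$. Hence line $DE$ is line $D^*E^*$ translated by a fixed vector, and all lines $DE$ pass through $I'=I''+\overrightarrow{D^*D}$; no ``metric condition'' is needed. This also shows your target $I^*=I$ was self-defeating: it would put the common point at $I+\overrightarrow{D^*D}\neq I$.

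Second, and this is the genuinely missing idea, you still have to prove $I'=I$, and the dilation does not give it. The paper proceeds as follows:
\begin{enumerate}
\item It takes $k$ to be the common tangent of $\Omega$ and $\omega_a$ at $A'$. This forces $DE=A'M$, the bisector of $\angle BA'C$. This is where Lemma~\ref{lemma2} is actually used, since it puts $I$ on that same line.
\item It proves that $F,I',E,P,A'$ are concyclic (Lemma~\ref{lemma3}).
\item It lets $\omega$ shrink to $C$ (taking $k=AC$). This places $I'$ on the circle $A'FC$, where $F$ is the contact point of $AC$ with $\omega_a$. By the proof of Lemma~\ref{lemma2}, that circle also contains $I$.
\end{enumerate}
Both $I$ and $I'$ lie on the bisector and differ from $A'$, so $I'=I$. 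Your proposal has no substitute for this step.
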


\begin{figure}[H]
    \centering
    \includegraphics{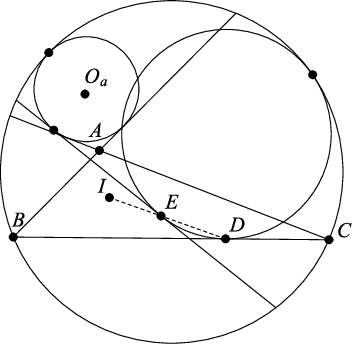}
    \caption{Theorem~\ref{saw}---Configuration 1.}
    \label{fig6}
\end{figure}
This theorem can be viewed as the Sawayama Lemma with vertex $A$ replaced by a circle, which provides a strong generalization.\\

Before proceeding with the proof, we need to specify the configuration of Theorem~\ref{saw}. As will become clear in the proof, this generalization relies heavily on properties of Laguerre geometry. In this geometry, two objects are said to be tangent if and only if they are tangent in the classical Euclidean sense and are similarly directed at the point of tangency. Hence, the configuration can be succinctly described as follows: "Objects (circles or lines) can be oriented so that all tangencies hold in the sense of Laguerre geometry."\\

\noindent Specifically, the configuration can be described as follows:\\

\noindent Circle~$\omega$ lies on the same side of line $BC$ as point $A$. Then:
  \begin{enumerate}
      \item if the circle $\omega_a$ lies on the opposite side of line $AC$ from point $B$, then the circle $\Omega$ is internally tangent to $\omega_a$, and the circle $\omega$ lies on the same side of line $k$ as $\omega_a$,
      \item if the circle $\omega_a$ lies on the same side of line $AC$ as point $B$, then the circle $\Omega$ is externally tangent to $\omega_a$, and the circle $\omega$ lies on the opposite side of line $k$ from $\omega_a$.
  \end{enumerate}

Thus, we have two configurations (Figures~\ref{fig6} and~\ref{fig7}). In the proof, we shall see a natural derivation of the above configuration.

\begin{figure}[H]
    \centering
    \includegraphics{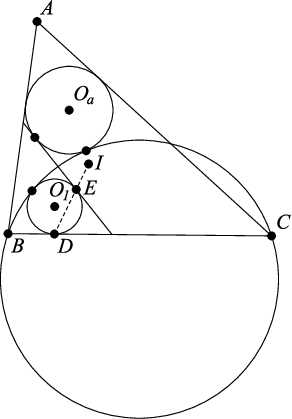}
    \caption{Theorem~\ref{saw}---Configuration 2.}
    \label{fig7}
\end{figure}

\begin{proof}
To simplify the proof, we adopt the configuration shown in Figure \ref{fig6}. The proof for the configuration in Figure \ref{fig7} is analogous.

If the circle $\omega_a$ is a point (i.e., its radius is zero), the problem becomes trivial, as it reduces to the classical Sawayama Lemma. Therefore, we assume from now on that this circle is non-degenerate.

Let points $O$ and $O_a$ denote the centers of circles $\omega$ and $\omega_a$, respectively.

\begin{figure}[H]
    \centering
    \includegraphics[]{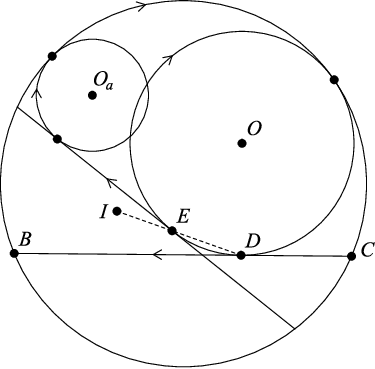}
    \caption{Before transformation.}
    \label{fig8}
\end{figure}

\begin{figure}[H]
    \centering
    \includegraphics[]{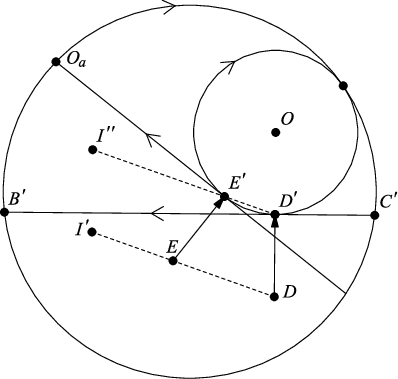}
    \vspace{-14pt}
    \caption{After transformation.}
    \label{fig9}
\end{figure}

Orient the circles and lines as in Figure \ref{fig8}, and then consider a transformation of the plane --- dilation (axial circular transformation)\cite{t4} mapping $\omega_a$ to the point $O_a$.
     
The situation after the transformation is shown in Figure \ref{fig9}. Let $B'$ and $C'$ be the intersection points of the circle $\Omega$ and line $BC$ after the transformation. Let $D'$ and $E'$ be the points of tangency of $\omega$ with lines $BC$ and $k$, respectively, after the transformation. For clarity, it is worth noting that it is not true that $D \rightarrowtail D'$ or $E \rightarrowtail  E'$ under this dilation, since our transformation maps lines to lines, not points to points (in particular, under dilation, a point corresponds to a circle with radius zero). From the Sawayama lemma applied to the triangle $O_aB'C'$, we obtain that line $E'D'$ passes through a fixed point $I''$, which is the incenter of triangle $O_aB'C'$.

Moreover, we mark in Figure \ref{fig9} the original points $D$ and $E$ from before the transformation (from Figure \ref{fig8}). Note that by the definition of dilation, the vector length $DD'$ equals $EE'$ (we shifted the line $BC$ by the same amount as line $k$). Since the points $O,\; E',\; E$ and $O,\; D',\; D$ are collinear, and $OE' = OD'$ and $EE' = D'D$, we conclude that $E'D' \parallel ED$.

Note that regardless of the choice of tangent $k$, the vector $DD'$ is constant --- its direction is perpendicular to $BC$, its orientation is fixed (since circles $\omega$ and $\omega_a$ have fixed orientation), and its length equals the radius of $\omega_a$, which is also constant. Consider now translating the lines $E'D'$ by the fixed vector $\overrightarrow{D'D}$ (i.e., the opposite of the vector $\overrightarrow{DD'}$). Then these lines transform into the lines $DE$, and the fixed point $I''$ transforms into a certain fixed point $I'$. Hence, regardless of the choice of line $k$, the lines $DE$ pass through some fixed point $I'$. It remains to show that $I' = I$.

Now consider the line $k$ tangent to circle $\omega_a$ at point $A'$. Let $M$ be the midpoint of arc $BC$ not containing point $A'$. Then points $A'$, $D$, and $M$ are collinear. Hence, point $I'$ lies on the angle bisector of angle $BA'C$.

\begin{figure}[H]
    \centering
    \includegraphics{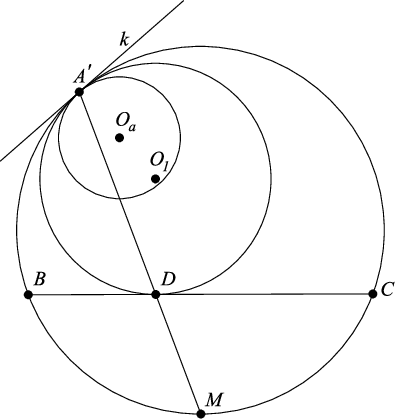}
    \caption{Line $k$ tangent at point $A'$.}
    \label{fig10}
\end{figure}

Let $P$ and $F$ be the point of tangency between circles $\Omega$ and $\omega$, and the point of tangency between circle $\omega_a$ and line $k$, respectively. Moreover let point $A'$ be the point of tangency of circles $\Omega$ and $\omega_a$.

\begin{lem}\label{lemma3}
Points $F$, $I'$, $E$, $P$, and $A'$ lie on the same circle (Figure \ref{fig11}).
\end{lem}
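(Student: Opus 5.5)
The plan is to run the angle chase from the proof of Lemma~\ref{lemma1} (equivalently, the classical proof of Proposition~\ref{p1}), with $A'$ playing the role of the vertex $A$ and the tangent line $k$ at $A'$ playing the role of the cevian $AD$. Throughout I use directed angles $\arcangle$ modulo $180^{\circ}$, so that the two configurations of Theorem~\ref{saw} are handled simultaneously. I read the statement as follows: $P$ is the contact point of $\omega$ with $\Omega$; $F$ is the contact point of $\omega$ with the line $k$ through $A'$; $E$ is the point of $\omega$ lying on the line through $D$ and $I'$. The collinearity of $A'$, $D$, $M$ established just before the lemma places $I'$ on the line $DA'$. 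The claim then splits into two cyclicities: $A'FPI'$ is cyclic, and $E$ lies on that same circle.

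\textbf{Step 1 (homothety at $P$).} The homothety centred at $P$ taking $\omega$ to $\Omega$ sends $D$ to $M$, the midpoint of arc $BC$ not containing $A'$, because the tangent $BC$ to $\omega$ at $D$ goes to the tangent of $\Omega$ at $M$, which is parallel to $BC$. Hence $P$, $D$, $M$ are collinear. Exactly as in \eqref{eq:angles}--\eqref{eq:power}, $MB$ is then tangent to the circle $(BDP)$, which gives
\[
MD\cdot MP = MB^2 = MC^2 .
\]

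\textbf{Step 2 (locating $I'$ on line $DA'$).} By Lemma~\ref{lemma2}, the line $A'I$ bisects $\angle BA'C$, so it passes through $M$. I would then identify $I'$ with the point of line $A'M$ satisfying $MI'=MB$, the analogue of the Incenter--Excenter Lemma in triangle $A'BC$. This gives
\[
MD\cdot MP = MI'^2,
\]
so $MI'$ is tangent to the circle $(I'DP)$. If identifying $I'$ directly proves awkward, I would instead characterise $I'$ as the intersection of line $A'M$ with the circle through $P$, $F$, $A'$; that characterisation is what Step~3 actually needs.

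\textbf{Step 3 (the cyclicity of $A'FPI'$).} Here I repeat the computation at the end of the proof of Lemma~\ref{lemma1}. The tangency of $MI'$ to $(I'DP)$ gives $\arcangle MI'D=\arcangle I'PM$. The homothety at $P$ gives $\arcangle PFD = \arcangle PA'M$, where $F$ corresponds to the point of $\Omega$ at which the tangent is parallel to $k$. The tangency of $k$ to $\omega$ at $F$ converts $\arcangle(k,FP)$ into an inscribed angle of $\omega$. Comparing these yields $\arcangle PFA' = \arcangle PI'A'$, so $F$, $P$, $A'$, $I'$ are concyclic.

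\textbf{Step 4 (adding $E$).} For $E$, I would show $\arcangle PEI' = \arcangle PA'I'$. Since $E$ and $D$ both lie on $\omega$ and on the line $DI'$, this angle is an inscribed angle of $\omega$. The homothety at $P$ transfers it to an inscribed angle of $\Omega$, which should match $\arcangle PA'M$ because $A'$, $I'$, $M$ are collinear.

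\textbf{Main obstacle.} I expect the hard step to be Step~3. In the classical lemma, the vertex lies on the circumcircle and the cevian passes through it. Here $k$ is merely the common tangent of $\Omega$ and $\omega_a$ at $A'$, so the homothety image of $F$ is no longer an obvious arc midpoint, and the chase must carefully use that $k$ is tangent to $\Omega$ at $A'$. My first attempt is to use the tangent--chord angle $\arcangle(k,A'P)=\arcangle A'MP$ on $\Omega$, which replaces the role played in Lemma~\ref{lemma1} by $\arcangle PCM$.
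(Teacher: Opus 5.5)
There is a genuine gap, and it begins with how you read the statement. In the lemma the special tangent at $A'$ has been set aside. Here $k$ is again an arbitrary tangent of $\omega_a$, and $F$ is its contact point with $\omega_a$, not with $\omega$. The point $E$ is the contact point of $\omega$ with $k$; it is indeed the second point of $\omega$ on line $DI'$, because $DE$ passes through $I'$. The collinearity of $A'$, $D$, $M$ holds only for the special circle that touches $\Omega$ at $A'$. For a general $\omega$, your Step 1 gives $P$, $D$, $M$ collinear with $P\neq A'$, so $D\notin A'M$.

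Under your reading the configuration is in fact degenerate. The circle $\omega$ lies inside $\Omega$, and your $k$ meets the closed disc of $\Omega$ only at $A'$. So $\omega$ would have to touch both $k$ and $\Omega$ at $A'$, which gives $F=P=A'$. Also, in Step 3 the line $MI'$ would pass through $D$, so it could not be tangent to the circle $(I'DP)$.

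Step 2 is false regardless of the reading. On line $A'M$, the only points at distance $MB$ from $M$ are the incenter and the $A'$-excenter of triangle $A'BC$; the incenter--excenter lemma needs a triangle inscribed in $\Omega$, and $ABC$ is not. The fixed point $I'$ is the incenter of $ABC$, and this differs from the incenter of $A'BC$ unless $A'=A$, i.e.\ unless $\omega_a$ degenerates to a point. (Equality would force $\angle A'BC=\angle ABC$, putting $A'$ on line $AB$, and likewise on line $AC$.) So $MD\cdot MP=MI'^2$ fails, and Step 3 has nothing to stand on.

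Your fallback, defining $I'$ as the second intersection of $A'M$ with the circle $(PFA')$, is circular. That this point lies on every line $DE$ is exactly what must be shown. In the paper the logic runs the other way: this lemma is what later identifies $I'$ with $I$.

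What is missing is any use of $\omega_a$, even though $F$ lies on it. The paper gets the cyclicity of $P,E,F,A'$ from the positive homothety centre of $\omega_a$ and $\omega$. By Monge's theorem for $\Omega,\omega_a,\omega$, that centre lies on $A'P$; it also lies on the common tangent $k$, so it is $J_+=k\cap A'P$. The inversion at $J_+$ exchanging $\omega$ and $\omega_a$ sends $E\mapsto F$ and $P\mapsto A'$. Hence $J_+E\cdot J_+F=J_+P\cdot J_+A'$, and $P,E,F,A'$ are concyclic.

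Your Step 4 is correct and is exactly the paper's second half. With $P,D,M$ collinear, the homothety at $P$ gives $\arcangle DEP=\arcangle MA'P=\arcangle I'A'P$. Since $D,E,I'$ are collinear, this puts $I'$ on the circle $(A'PE)$. The two circles share $A'$, $P$, $E$, which gives all five points. So keep Steps 1 and 4, and replace Steps 2--3 with this homothety-centre and power-of-a-point argument.
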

\begin{proof}
\begin{figure}[H]
    \centering
    \includegraphics[scale=0.97]{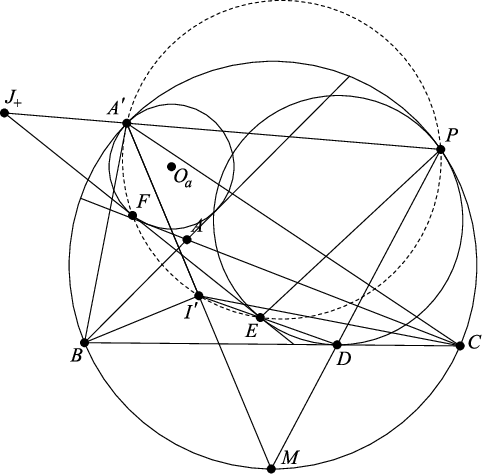}
    \caption{Concyclicity of points $F$, $I'$, $E$, $P$, $A'$.}
    \label{fig11}
\end{figure}

We first show that quadrilateral $PEFA'$ is cyclic. Let $J_+ = k \cap A'P$. By Monge’s Theorem (see \cite{chen}) for circles $\Omega$, $\omega_a$, and $\omega$, the center of the positive homothety mapping $\omega_a$ to $\omega$ lies on line $PA'$. Clearly, this center also lies on $k$. Therefore, $J_+$ is the center of the positive homothety mapping $\omega_a$ to $\omega$, and thus also the center of the positive inversion mapping $\omega$ to $\omega_a$. Under this mapping, $P \rightarrowtail A'$ and $E \rightarrowtail F$. Thus, by the power of a point, quadrilateral $PEFA'$ is cyclic.

Next, we show that quadrilateral $A'PEI'$ is cyclic. Again, we denote by $M$ the midpoint of arc $BC$ of circle $\Omega$ not containing point $A'$. Then line $DP$ passes through point $M$. Furthermore, the points $A'$, $I'$, and $M$ are collinear. Consider the homothety mapping $\omega$ to $\Omega$. We have:
\begin{equation}
    \arcangle DEP = \arcangle MA'P = \arcangle I'A'P,
\end{equation}
so quadrilateral $A'PEI'$ is cyclic.

Since points $P$, $E$, $F$, and $A'$ lie on the same circle and points $A'$, $P$, $E$, and $I'$ lie on the same circle, then points $P$, $E$, $I'$, $F$, $A'$ are concyclic.
\end{proof}

\begin{figure}[H]
    \centering
    \includegraphics{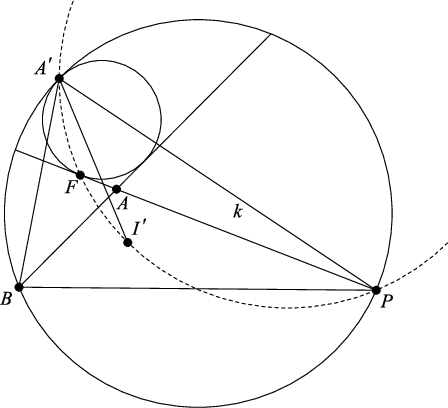}
    \caption{Degenerate circle $\omega = C$.}
    \label{fig12}
\end{figure}

From Lemma \ref{lemma3}, the points $A'$, $F$, $I'$, and $P$ lie on the same circle for any position of point $P$ corresponding to a non-degenerate circle $\omega$. Consider a line $k$ passing through point $C$ such that points $A'$ and $B$ lie on opposite sides of it. Then, circle $\omega$ degenerates to point $C$, so $P = C$. By continuity of cyclicity for points $A'$, $F$, $I'$, and $P$, we get that $A'$, $F$, $I'$, and $P = C$ lie on the same circle (otherwise, we would find a point $P$ on arc $A'C$ not containing $B$ such that circle $A'FI'P$ is not cyclic). From the proof of Lemma \ref{lemma2}, we know that points $A'$, the point of tangency of $AC$ with $\omega$ (here, point $F$), point $I$, and point $C$ lie on the same circle. Thus, both points $I$ and $I'$ lie on the circumcircle of triangle $A'FC$ and on line $A'I$. Since both points are distinct from $A'$, it follows that $I' = I$, which proves that line $DE$ passes through a fixed point --- the incenter of triangle $ABC$.

\end{proof}

\section{Generalization of the Sawayama--Thébault Theorem}
In this section, we present a proof of the Generalization of the Sawayama--Thébault Theorem, which constitutes the second main result of this paper. The key step in the proof is the application of the Generalization of the Sawayama Lemma (Theorem~\ref{saw}), proved in the previous section, similarly to how Yazaburo Sawayama used the classical version (Lemma~\ref{p1}) in his original proof. Moreover, we employ the theory of projective involutions~\cite{coxeter}.

First, we prove a lemma that will be useful in the proof of the Generalization of the Sawayama--Thébault Theorem.

\begin{lem}\label{lem4}
Let circles $\Omega$ and $\omega$ be internally tangent at point $A$, and let $B \neq A$ be a point in the plane. Let $X$ be a point on circle $\omega$, and let $f$ be an involution on that circle. Define $Y = (AXB)\cap \Omega$ (the second intersection point), and $Y' = (Af(X)B)\cap \Omega$ (the second intersection point). Then the map 
\begin{equation}
    Y(\Omega) \rightarrowtail Y'(\Omega)
\end{equation}
is an involution.
\end{lem}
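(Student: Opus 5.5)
Our plan is to show that the map $X(\omega) \mapsto Y(\Omega)$ is a projectivity between the two conics $\omega$ and $\Omega$. Once this is established, the map $Y \mapsto Y'$ is the conjugate of $f$ by this projectivity, namely $\varphi\circ f\circ\varphi^{-1}$. A conjugate of an involution by a projectivity is again a projective involution, since it is projective and squares to the identity. This gives the claim.

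To see that $\varphi : X \mapsto Y$ is a projectivity, we will invert at $A$ with any radius. The tangent circles $\omega$ and $\Omega$ through $A$ become two parallel lines $\ell_\omega$ and $\ell_\Omega$. The point $B$ goes to some point $B^*$. Each circle $(AXB)$ becomes the line $X^*B^*$. Hence $Y^*$ is the intersection of the line $X^*B^*$ with $\ell_\Omega$, so $X^*\mapsto Y^*$ is the central projection from $B^*$ between the two lines, which is a projectivity of lines. We also need to know how an inversion centered at a point $A$ of a circle acts on the points of that circle. It is stereographic projection from $A$ followed by a similarity, and it identifies the projective structure of the conic with that of the line; the standard fact is that cross-ratios on a circle, measured from any point of it, equal the cross-ratio of the image points on the inverted line. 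Therefore $\varphi$ is the composition of three projectivities,
\begin{equation}
\omega \to \ell_\omega \to \ell_\Omega \to \Omega,
\end{equation}
and so it is a projectivity.

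Several details must be checked. The fixed point $X=A$ maps to $Y=A$: in the inverted picture $A$ corresponds to the point at infinity of both parallel lines, and central projection sends it to the common point at infinity, so this is consistent. The degenerate case $B^*\in\ell_\omega$, i.e. $B\in\omega$, would break bijectivity. In that case every circle $(AXB)$ coincides with $\omega$, so the statement implicitly requires $B\notin\omega$, and likewise $B\notin\Omega$. The involution $f$ must be understood as a projective involution on the conic $\omega$, as in \cite{coxeter}.

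The main obstacle is making rigorous, in the paper's framework, the claim that inversion restricted to a circle through the center is a projective map between conics. We plan to justify it by cross-ratio invariance. For four points on a circle through $A$, the cross-ratio seen from $A$ is computed from the sines of angles at $A$. These angles are preserved by inversion as angles between the lines through $A$, so the cross-ratio equals that of the image points on the line, and a cross-ratio-preserving bijection is a projectivity.
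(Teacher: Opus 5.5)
Your proposal is correct and follows the paper's own route. You invert at $A$, so that $\omega$ and $\Omega$ become parallel lines and each circle $(AXB)$ becomes a line through $B^*$; then $X^*\mapsto Y^*$ is a central projection from $B^*$, which carries the involution $f$ to an involution on $\Omega$. Your additions — the explicit conjugation $\varphi\circ f\circ\varphi^{-1}$, the observation that inversion on a circle through $A$ is just projection of that conic from its point $A$ (hence projective), and the exclusion of $B\in\omega\cup\Omega$ — only make explicit what the paper's three-line proof leaves implicit.
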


\begin{proof}
Consider an inversion centered at point $A$ with any nonzero radius. This gives us the situation shown in Figure \ref{fig16}. Reflecting point $X$ through point $B$ onto line $\Omega'$, we see that $Y$ and $Y'$ form an involutive pair on line $\Omega'$. Therefore, the map $Y(\Omega) \rightarrowtail Y'(\Omega)$ is also an involution, as desired.
\end{proof}

\begin{figure}[H]
  \centering
  \begin{minipage}{0.45\textwidth}
    \centering
    \includegraphics{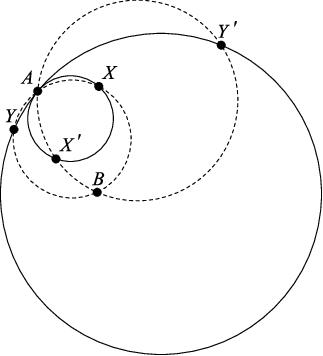}
    \caption{Before inversion.}
    \label{fig15}
  \end{minipage}\hfill
  \begin{minipage}{0.45\textwidth}
    \centering
    \vspace{74pt}
    \includegraphics{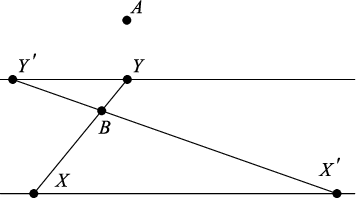}
    \caption{After inversion.}
    \label{fig16}
  \end{minipage}
\end{figure}

Now we present the final result of this paper.

\begin{thm}\label{th32}
\textbf{(Generalized Sawayama--Thébault Theorem)}  
In triangle $ABC$, let $\omega_a$ be a circle tangent to sides $AB$ and $AC$, and let $\Omega$ be a circle passing through $B$ and $C$ and tangent to $\omega_a$.
Let $P$ be any point on a fixed line~$t$. Circles $\omega_1$ and $\omega_2$ are tangent to segment $BC$, to circle $\Omega$, and respectively to lines $k$ and $l$. Then, for any choice of point $P$ on $t$, the line $O_1O_2$ passes through a fixed point $I$, the incenter of triangle $ABC$.
\end{thm}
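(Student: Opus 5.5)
Throughout, I read the statement as follows: $k$ and $l$ are the two tangents from $P$ to $\omega_a$, and $\omega_1$ (resp. $\omega_2$) is the circle tangent to $BC$, to $\Omega$ and to $k$ (resp. $l$), with the orientations fixed as in the configuration of Theorem~\ref{saw}. In the degenerate case $\omega_a=A$, $t=BC$ this is exactly the classical theorem.

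The plan is to imitate Sawayama's original argument, with Theorem~\ref{saw} replacing Proposition~\ref{p1}, and to use Lemma~\ref{lem4} to control how the two circles depend on each other as $P$ moves on $t$. Write $D_i$, $E_i$ and $P_i$ for the points where $\omega_i$ touches $BC$, the line $k$ (resp. $l$), and $\Omega$. Write $F_1,F_2$ for the points where $k,l$ touch $\omega_a$, and $A'$ for the tangency point of $\Omega$ and $\omega_a$. By Theorem~\ref{saw}, both lines $D_1E_1$ and $D_2E_2$ pass through the incenter $I$. Hence $I=D_1E_1\cap D_2E_2$, and the task is to show $O_1$, $I$, $O_2$ are collinear.

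\emph{Step 1 (an involution on $BC$).} As $P$ runs over $t$, the tangency points $F_1,F_2$ of the two tangents from $P$ form an involution $f$ on $\omega_a$: its pairs are cut out by lines through the pole of $t$. By Lemma~\ref{lemma3}, the points $F_i$, $I$, $E_i$, $P_i$, $A'$ are concyclic. Hence $P_i$ is the second intersection of the circle $(A'F_iI)$ with $\Omega$. Lemma~\ref{lem4} can then be applied to the tangent circles $\Omega\supset\omega_a$ at $A'$, with $B:=I$ and $X=F_1$. It shows that $P_1\mapsto P_2$ is an involution on $\Omega$. Projecting from the arc midpoint $M$, using that $M$, $D_i$, $P_i$ are collinear as in the proof of Theorem~\ref{saw}, turns this into an involution $D_1\mapsto D_2$ on the line $BC$.

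\emph{Step 2 (reduce collinearity to an identity on $BC$).} Let $H$ be the foot of the perpendicular from $I$ to $BC$, and let $r$ be the distance from $I$ to $BC$. Let $r_i$ be the radius of $\omega_i$, and $x_i$ the signed abscissa of $D_i$ measured from $H$. Since $O_iD_i\perp BC$, the points $O_1$, $I$, $O_2$ are collinear iff
\begin{equation*}
(r_1-r)\,x_2=(r_2-r)\,x_1 .
\end{equation*}
Since $I\in D_iE_i$ and $O_iD_i=O_iE_i$, the radius $r_i$ is determined by $x_i$ and the direction of the tangent line $k$ or $l$. Indeed, the line $D_iE_i$ is perpendicular to the bisector of the angle between $BC$ and that tangent line, and it passes through $I$. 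This rewrites $r_i$ as an explicit function of $x_i$ and of the slope of the corresponding tangent.

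\emph{Step 3 (main obstacle).} I must show that the resulting relation between $(x_1,\text{slope of }k)$ and $(x_2,\text{slope of }l)$ holds identically in $P\in t$. Classically this is automatic, because $PO_1\perp PO_2$ gives $D_1E_1\parallel PO_2$ and $D_2E_2\parallel PO_1$, and a similar-triangles argument finishes. Here $P$ is no longer on $BC$, so that orthogonality is lost. My approach is to treat both sides of the identity as functions of a projective parameter on $t$. The pairs $(D_1,D_2)$ come from the involution of Step 1, and the pairs $(k,l)$ form the involution of tangents cut by $t$. Both sides are then rational functions of bounded degree in this parameter, so it suffices to verify the identity at finitely many positions of $P$.

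For these test positions I would take:
\begin{itemize}
\item $P=t\cap BC$, where $BC$ itself is one of the tangent-type degenerations;
\item the two points of $t$ on the tangents to $\omega_a$ through $B$ and through $C$, where one circle degenerates to $B$ or $C$ (as in the degenerate case $\omega=C$ used at the end of the proof of Theorem~\ref{saw});
\item the point where a tangent touches $\omega_a$ at $A'$, where Lemma~\ref{lemma2} says $A'I$ bisects $\angle BA'C$.
\end{itemize}
Checking that these degenerate cases indeed satisfy the identity, and that they supply enough points, is where I expect the real difficulty to lie. If the degree count fails, I would instead apply the Laguerre dilation of Theorem~\ref{saw}, shrinking $\omega_a$ to the point $O_a$. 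This maps $k,l$ to the lines through $O_a$ parallel to the shifted tangents, and reduces the statement to the classical Sawayama--Thébault Theorem in triangle $O_aB'C'$. The remaining issue would be to track the fixed translation, exactly as in the proof of Theorem~\ref{saw}, and to identify the fixed point with $I$ by the same concyclicity argument.
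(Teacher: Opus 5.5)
Your Step 1 matches the first half of the paper's proof. It has the polar involution on $\omega_a$, the concyclicity from Lemma~\ref{lemma3}, Lemma~\ref{lem4} with $B:=I$, and projection from $M$ onto $BC$. The gap is in Steps 2--3, which try to prove that $O_1$, $I$, $O_2$ are collinear. That is false in general once $\omega_a$ is non-degenerate, and the paper does not prove it. Its argument only shows that all lines $O_1O_2$ pass through \emph{some} fixed point, and its closing Remark says this point depends on $t$.

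You can check this directly. Take $t$ through $A$ and missing $\omega_a$, for example the parallel to $BC$ through $A$. As $P\to A$, the tangents tend to $AB$ and $AC$, and the circles shrink to the points $B$ and $C$. This is the same degeneration as $\omega=C$ at the end of the proof of Theorem~\ref{saw}. So $O_1O_2\to BC$, which stays at distance equal to the inradius from $I$. Hence $(r_1-r)x_2=(r_2-r)x_1$ is not an identity in $P$, and no set of test positions can confirm it.

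Your fallback fails for finite $P$. After the dilation, $k$ and $l$ become two \emph{different} lines through $O_a$, not one cevian taken with both orientations. It reduces to the classical theorem only when $k\parallel l$, that is, when $P$ is at infinity. Even then it yields the incenter $I''$ of $O_aB'C'$, which by the proof of Theorem~\ref{saw} is $I$ translated by the radius of $\omega_a$ perpendicular to $BC$. Also, $P=t\cap BC$ is not a degeneration, since $BC$ is not tangent to $\omega_a$.

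The missing idea is the paper's finish, and your Step 2 hides it. The direction of $D_iE_i$ depends only on the direction of the tangent line, because it is perpendicular to the bisector at $BC\cap k$. So $I\in D_iE_i$ only ties $x_i$ to that direction and says nothing about $r_i$. The relation you need comes from tangency to $\Omega$ alone. The distance $OO_i=R-r_i$ equals the distance from $O_i$ to the line $m\parallel BC$ at distance $R$ on the side of $A'$. Hence $O_1,O_2$ lie on the fixed parabola with focus $O$ and directrix $m$, and $r_i$ is a quadratic function of $x_i$.

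The map $D_i\mapsto O_i$ projects $BC$ onto this parabola from its point at infinity perpendicular to $BC$, so it is projective. Your involution $D_1\leftrightarrow D_2$ on $BC$ therefore becomes an involution on a conic. All chords $O_1O_2$ of that involution pass through one fixed point, which is the correct conclusion; in general that point is not $I$.
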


\begin{figure}[H]
  \centering
    \centering
    \includegraphics{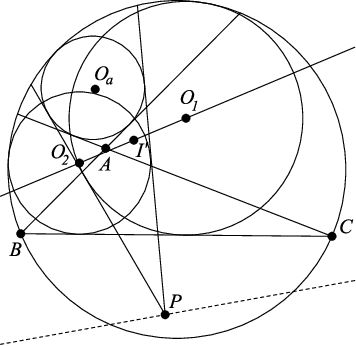}
    \vspace{15pt}
    \caption{Theorem~\ref{th32}---configuration 1.}
    \label{fig13}
\end{figure}

Again, before proceeding with the proof, we need to specify the configuration of Theorem~\ref{th32}. It suffices to note that the configuration is the same as in Theorem~\ref{saw}. Therefore, for clarity, we do not provide a detailed description here. We only assume that the fixed line $t$ does not intersect the circle $\omega_a$, since point $P$ must lie outside this circle in order to draw tangents.

\begin{figure}[H]
    \centering
    \includegraphics[scale=0.82]{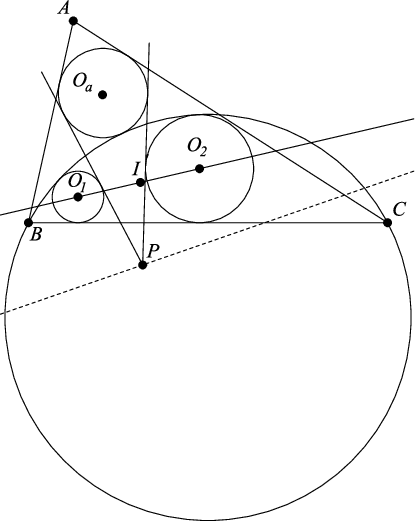}
    \caption{Theorem~\ref{th32}---configuration 2.}
    \label{fig14}
\end{figure}

\begin{proof}
If circle $\omega_a$ is degenerate (i.e., a point $A$), then we obtain the classical Sawayama--Thébault Theorem. From now on, we assume that $\omega_a$ is non-degenerate.

To simplify the proof, we assume the configuration from Figure \ref{fig13}. The proof for the configuration in Figure \ref{fig14} is analogous.

Let us define the following points (Figure \ref{fig17}):  
\begin{itemize}
    \item $D$ --- point of tangency of circle $\omega_1$ with $BC$, 
    \item $E$ --- point of tangency of circle $\omega_1$ with line $k$,
    \item $F$ --- point of tangency of circle $\omega_2$ with $BC$,  
    \item $G$ --- point of tangency of circle $\omega_2$ with line $l$, 
    \item $X$ --- point of tangency of line $k$ with circle $\omega_a$, 
    \item $X'$ --- point of tangency of line $l$ with circle $\omega_a$,
    \item $I$ --- the incenter of triangle $ABC$,  
    \item $A'$ --- point of tangency of circle $\omega_a$ with $\Omega$,
    \item $Y$ --- point of tangency of $\Omega$ and $\omega_1$,
    \item $Y'$ --- point of tangency of $\Omega$ and $\omega_2$,
    \item $O$ --- center of the circumcircle of triangle $A'BC$.
    
\end{itemize}

Now consider the polar transformation with respect to circle $\omega_a$. Since point $P$ lies on a fixed line, the map 
\begin{equation}
    X(\omega_a) \rightarrowtail X'(\omega_a)
\end{equation}
is an involution. From the Lemma \ref{lemma3}, we obtain that the pentagons $A'XEIY$ and $A'X'GIY'$ are cyclic. Moreover, the points $I$ and $A'$ are fixed, independent of the position of point $P$. By Lemma \ref{lem4}, with $\Omega = \Omega$, $\omega = \omega_a$, $A = A'$, $B = I$, and the involution
\begin{equation}
    f := X(\omega_a) \rightarrowtail X'(\omega_a),
\end{equation}
we conclude that the map 
\begin{equation}
    Y(\Omega) \rightarrowtail Y'(\Omega)
\end{equation}
is an involution.

Let line $m$ be the line parallel to $BC$, at distance $R$ from it, and lying on the same side of line $BC$ as point $A'$, where $R$ is the radius of the circle $\Omega$. Consider the parabola $\mathcal{P}$ with directrix $m$ and focus $O$. By construction, $B \in \mathcal{P}$ and $C \in \mathcal{P}$. Let 
\begin{equation}
    D' = O_1D \cap m.
\end{equation} 
Note that:
\begin{equation}
    O_1D' = DD' - O_1D = R - O_1Y = O_1O \implies O_1 \in \mathcal{P}.
\end{equation}
Similarly, we show that $O_2 \in \mathcal{P}$.

\begin{figure}
    \centering
    \includegraphics[scale=0.8]{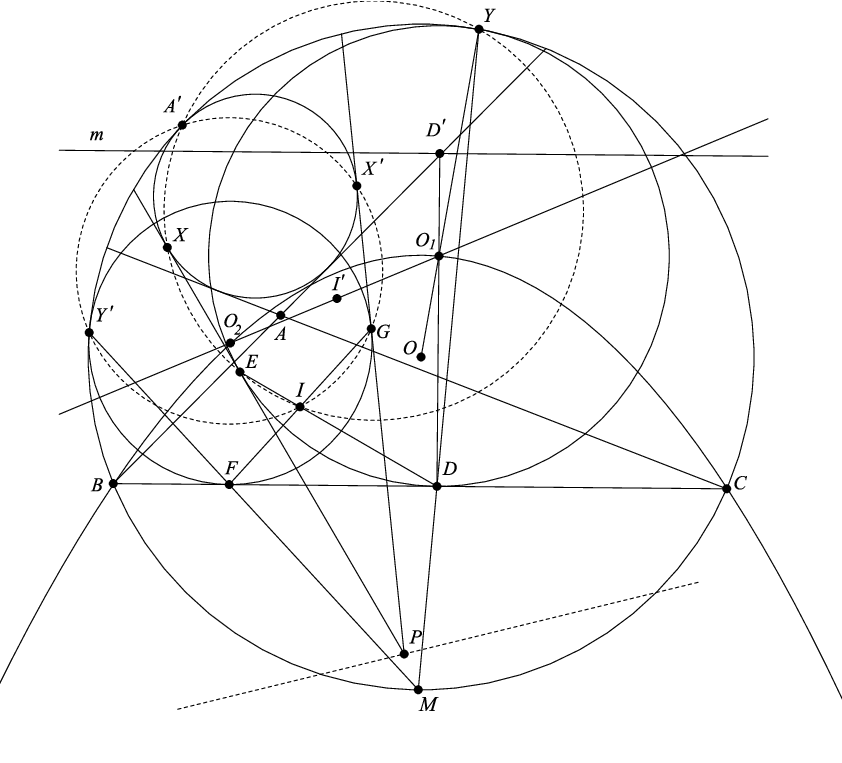}
    \caption{Generalization of the Sawayama--Thébault Theorem.}
    \label{fig17}
\end{figure}

Let $M$ be the midpoint of arc $BC$ of circle $\Omega$ not containing point $A'$, and let $\infty_{DD'}$ be the point at infinity on line $DD'$. From the perpendicularity of $DD'$ to the directrix $m$, we obtain $\infty_{DD'} \in \mathcal{P}$. Now consider the following projective map:

\begin{equation}
    \begin{split}
        Y(\Omega) &\rightarrowtail MY(M) 
        \\&\rightarrowtail MY\cap BC (BC) 
        \\&\rightarrowtail D(BC) 
        \\&\rightarrowtail\infty_{DD'}D(\infty_{DD'}) 
        \\&\rightarrowtail O_1(\mathcal{P}).
    \end{split}
\end{equation}
Since for this projective transformation we have:
\begin{equation}
    Y(\Omega) \rightarrowtail O_1(\mathcal{P}) \quad \text{and} \quad Y'(\Omega) \rightarrowtail O_2(\mathcal{P}),
\end{equation}
and the map 
\begin{equation}
    Y(\Omega) \rightarrowtail Y'(\Omega) 
\end{equation}
is an involution, it follows that the map 
\begin{equation}
    O_1(\mathcal{P}) \rightarrowtail O_2(\mathcal{P})
\end{equation} is also an involution. Hence, by the definition of involution on a conic, the line $O_1O_2$ passes through a fixed point, which completes the proof.

\end{proof}

\begin{rem}
The point $I'$ (the fixed point on line $O_1O_2$), as it depends on the chosen line along which point $P$ moves, is not a distinguished point. The easiest way to determine it is by considering two degenerate cases: $P = AC \cap t$ and $P = AB \cap t$.
\end{rem}

{\noindent\LARGE{\textbf{Statements and Declarations}}}\\

\noindent\textbf{Conflict of interest} The authors declare that they have no conflict of interest.\\

\noindent\textbf{Competing interests} The authors declare no competing interests.

% BibTeX users please use one of
%\bibliographystyle{spbasic}      % basic style, author-year citations
%\bibliographystyle{spmpsci}      % mathematics and physical sciences
%\bibliographystyle{spphys}       % APS-like style for physics
%\bibliography{}   % name your BibTeX data base

% Non-BibTeX users please use

% ------------------------------------------------------------------------
\end{document}